\newcommand{\mmp}{\mathbb{P}}
\newcommand{\od}{\overset{d}{=}}
\newcommand{\me}{\mathbb{E}}
\newcommand{\mr}{\mathbb{R}}
\newcommand{\mn}{\mathbb{N}}
\newcommand{\1}{\mathbbm{1}}
\newcommand{\lix}{\underset{x\to\infty}{\lim}}
\newtheorem{thm}{Theorem}[section]
\newtheorem{assertion}[thm]{Proposition}
\theoremstyle{definition}
\newtheorem{ex}[thm]{Example}
\theoremstyle{remark}
\begin{document}

\title{Exponential Moments of First Passage Times and Related Quantities for Random Walks}
\date{\today}

\author{Alexander Iksanov   \\
                National T.\ Shevchenko University of Kiev  \\
                \and
                Matthias Meiners\footnote{Research supported by DFG-grant Me 3625/1-1}   \\
                Uppsala University}

\maketitle
\begin{abstract}
For a zero-delayed random walk on the real line, let $\tau(x)$,
$N(x)$ and $\rho(x)$ denote the first passage time into the
interval $(x,\infty)$, the number of visits to the interval
$(-\infty,x]$ and the last exit time from $(-\infty,x]$,
respectively. In the present paper, we provide ultimate criteria
for the finiteness of exponential moments of these quantities.
Moreover, whenever these moments are finite, we derive their
asymptotic behaviour, as $x \to \infty$.
\end{abstract}

\noindent Keywords: first-passage time, last exit time, number of
visits, random walk, renewal theory

\noindent
2010 Mathematics Subject Classification: Primary: 60K05 \\  
\hphantom{2000 Mathematics Subject Classification: }Secondary: 60G40    

\section{Introduction and main results}

Let $(X_n)_{n \geq 1}$ be a sequence of i.i.d.\ real-valued random
variables and $X := X_1$. Further, let $(S_n)_{n \geq 0}$ be the
zero-delayed random walk with increments $S_n-S_{n-1} = X_n$, $n
\geq 1$. For $x\in\mr$, define the \emph{first passage time} into
$(x,\infty)$
\begin{equation*}
\tau(x) ~:=~    \inf\{n \in \mn_0: S_n>x\},
\end{equation*}
the \emph{number of visits} to the interval $(-\infty, x]$
\begin{equation*}
N(x)    ~:=~    \#\{n\in\mn: S_n \leq x\}   ~=~ \sum_{n \geq 1} \1_{\{S_n \leq x\}},
\end{equation*}
and the \emph{last exit time} from $(-\infty,x]$
\begin{gather*}
\rho(x) ~:=~
\left\{%
\begin{array}{ll}
    \sup\{n \in \mn: S_n \leq x\}, & \hbox{\text{if} \ $\inf_{n \geq 1} S_n \leq x$,} \\
    0, & \hbox{\text{if} \ $\inf_{n \geq 1} S_n >
x$.} \\
\end{array}%
\right.
\end{gather*}
Note that, for $x\geq 0$,
\begin{equation*}
\rho(x) ~=~	\sup\{n \in \mn_0: S_n \leq x\}.
\end{equation*}
For typographical ease, throughout the text we write $\tau$ for
$\tau(0)$, $N$ for $N(0)$ and $\rho$ for $\rho(0)$.

Our aim is to find criteria for the finiteness of the exponential
moments of $\tau(x)$, $N(x)$ and $\rho(x)$, and to determine the
asymptotic behaviour of these moments, as $x \to \infty$.

Assuming that $0 < \me X < \infty$, Heyde \cite[Theorem 1]{Heyde1964} proved that
\begin{equation*}
\me e^{a\tau(x)} < \infty \text{ for some } a>0 \qquad \text{iff}
\qquad \me e^{bX^-} < \infty \text{ for some } b>0.
\end{equation*}
See also \cite[Theorem 2]{Borov} and \cite[Theorem 2]{Doney1989} for relevant results.

When $\mmp\{X \geq 0\}=1$ and $\mmp\{X=0\}<1$,
\begin{equation}\label{xyz}
\tau(x)-1 ~=~ N(x) ~=~ \rho(x), \quad   x \geq 0.
\end{equation}
Plainly, in this case, criteria for all the three random variables
are the same (Proposition \ref{main0}). An intriguing consequence
of our results in case when $\mmp\{X<0\}\mmp\{X>0\}>0$, in which
\begin{equation}    \label{eq:tau_leq_N_leq_rho+1}
\tau(x)-1   ~\leq~  N(x) ~\leq~ \rho(x),
\quad   x \geq 0,
\end{equation}
is that provided the abscissas of convergence of the moment generating
functions of $\tau(x)$, $N(x)$ and $\rho(x)$ are positive there exists
a unique value $R>0$ such that {\it typically}
\begin{equation*}
\me e^{a\tau(x)}<\infty, \ \me e^{aN(x)}<\infty \ \text{iff} \
a \leq R, \ \text{yet} \ \me e^{a\rho(x)}<\infty \ \text{iff} \
a < R.
\end{equation*}
In particular, typically
\begin{equation*}
\me e^{R \tau(x)}<\infty, \ \me e^{R N(x)}<\infty,
\ \text{but} \ \me e^{R \rho(x)}=\infty.
\end{equation*}
Also we prove that whenever the exponential moments are finite
they exhibit the following asymptotics:
$$\me e^{a\tau(x)}~\sim~ C_1 e^{\gamma x}, \ \me e^{aN(x)}~\sim~ C_2 e^{\gamma x}, \ \me e^{a\rho(x)}~\sim~ C_3
e^{\gamma x}, \ \ x\to\infty,$$ for explicitly given $\gamma>0$
and distinct positive constants $C_i$, $i=1,2,3$ (when the law of $X$ is lattice with span $\lambda>0$ the limit is taken over $x\in
\lambda \mn$). Our results should be compared (or contrasted) to
the known facts concerning power moments (see \cite[Theorem 2.1
and Section 4.2]{KestMaller} and \cite[Theorem 2.2]{KestMaller},
respectively): for $p>0$
\begin{equation*}
\me (\tau(x))^{p+1}<\infty \quad   \Leftrightarrow \quad \me
(N(x))^p<\infty \quad   \Leftrightarrow \quad \me
(\rho(x))^p<\infty;
\end{equation*}
\begin{equation*}
\me (\tau(x))^p ~\asymp~ \me (N(x))^p~\asymp~ \me
(\rho(x))^p~\asymp~ \bigg(\frac{x}{\me \min (X^+, x)}\bigg)^p, \ \
x \to \infty
\end{equation*}
where $f(x)\asymp g(x)$ means that
$0<\underset{x\to\infty}{\lim\inf}\,\frac{f(x)}{g(x)}\leq
\underset{x\to\infty}{\lim\sup}\,\frac{f(x)}{g(x)} <\infty$.

Proposition \ref{main0} is due to Beljaev and Maksimov
\cite[Theorem 1]{BelMaks}. A shorter proof can be found in
\cite[Theorem 2.1]{IksMei}.

\begin{assertion}\label{main0}
Assume that $\mmp\{X\geq 0\}=1$ and let $\beta:=\mmp\{X=0\}\in [0,1)$.
Then for $a>0$ the following conditions are equivalent:
\begin{equation*}
\me e^{a\tau(x)}    <   \infty  \text{ for some (hence every)} \ x\geq 0;
\end{equation*}
\begin{equation*}
a<-\log \beta
\end{equation*}
where $-\log \beta := \infty$ if $\beta = 0$. The same equivalence
also holds for $N(x)$ and $\rho(x)$.
\end{assertion}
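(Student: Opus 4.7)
The plan is as follows. By the identity $\tau(x) - 1 = N(x) = \rho(x)$ recorded in (\ref{xyz}), the three statements are equivalent for $x \geq 0$, so it suffices to analyze $\me e^{a\tau(x)}$.

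The main idea will be a time-change that decouples the zero and non-zero increments of the walk. I would set $T_0 := 0$ and $T_k := \inf\{n > T_{k-1}: X_n > 0\}$, and define $Y_k := X_{T_k}$. Then $G_k := T_k - T_{k-1}$ is geometric with $\mmp\{G_1 = j\} = \beta^{j-1}(1-\beta)$ for $j \geq 1$, the $Y_k$ are i.i.d.\ with the law of $X$ conditioned on $\{X > 0\}$, and $(G_k)_{k \geq 1}$ and $(Y_k)_{k \geq 1}$ are independent. Since the walk first crosses $x$ at the $\nu(x)$-th strictly positive jump, where $\nu(x) := \inf\{k \geq 1: Y_1 + \ldots + Y_k > x\}$, I obtain
\begin{equation*}
\tau(x) ~=~ G_1 + \ldots + G_{\nu(x)}, \qquad \me e^{a\tau(x)} ~=~ \me\bigl[\varphi(e^a)^{\nu(x)}\bigr],
\end{equation*}
with $\varphi(z) := (1-\beta)z/(1-\beta z)$ finite exactly on $z \in [0, 1/\beta)$.

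Necessity of $a < -\log \beta$ will be immediate from $\tau(x) \geq G_1$: if $a \geq -\log \beta$, then $\me e^{aG_1} = \infty$, hence so is $\me e^{a\tau(x)}$. For the converse, fixing $a < -\log \beta$ and setting $c := \varphi(e^a) \in (1,\infty)$, I would exploit strict positivity of $Y_1$ via Markov's inequality to write, for each $t > 0$,
\begin{equation*}
\mmp\{\nu(x) > n\} ~=~ \mmp\{Y_1 + \ldots + Y_n \leq x\} ~\leq~ e^{tx}\,r(t)^n, \qquad r(t) := \me e^{-tY_1},
\end{equation*}
and note that $r(t) < 1$ for $t > 0$ with $r(t) \downarrow 0$ as $t \to \infty$ by dominated convergence. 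Choosing $t$ so that $c\,r(t) < 1$ will then yield $\me c^{\nu(x)} < \infty$, whence $\me e^{a\tau(x)} < \infty$ for every $x \geq 0$.

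The only step of real substance is the tail bound on $\nu(x)$, and strict positivity of $Y_1$ reduces it to a one-line Chernoff estimate, so I do not anticipate a serious technical obstacle. The degenerate case $\beta = 0$ is subsumed under the convention $-\log \beta = \infty$: then $G_k \equiv 1$ and $\tau(x) = \nu(x)$, and the Chernoff bound alone finishes the argument.
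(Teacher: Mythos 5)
Your argument is correct and complete. Note, however, that the paper does not prove Proposition \ref{main0} at all: it attributes the result to Beljaev and Maksimov \cite{BelMaks} and points to \cite[Theorem 2.1]{IksMei} for a shorter proof, both of which work with analytic properties of the generating function $\sum_n \mmp\{N(x)=n\}z^n$ (equivalently, of the renewal counting process). Your route is genuinely different and self-contained: you reduce to $\tau(x)$ via \eqref{xyz}, split the walk into the i.i.d.\ geometric waiting times $G_k$ between strictly positive jumps and the conditioned jumps $Y_k$, obtain $\me e^{a\tau(x)} = \me\bigl[\varphi(e^a)^{\nu(x)}\bigr]$ with $\varphi(z)=(1-\beta)z/(1-\beta z)$ by independence of the two sequences, and finish with a Chernoff bound on $\mmp\{\nu(x)>n\}=\mmp\{Y_1+\dots+Y_n\le x\}$ using $r(t)=\me e^{-tY_1}\downarrow 0$. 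Every step checks out: the necessity direction via $\tau(x)\ge G_1$ is clean, the identity $\me\bigl[z^{G_1+\dots+G_{\nu(x)}}\bigr]=\me\bigl[\varphi(z)^{\nu(x)}\bigr]$ holds in $[0,\infty]$ by monotone convergence, and your bound gives finiteness for \emph{every} $x\ge 0$, which settles the ``some hence every'' clause. What your approach buys is an elementary, fully explicit proof requiring no external renewal-theoretic input; what the citation-based route buys the authors is brevity and consistency with the machinery (harmonic renewal measures, tilted measures) used for the harder Theorems \ref{main1} and \ref{main2}. One stylistic caveat: the paper already uses $\varphi$ for the Laplace transform of $X$, so you should rename your generating function to avoid a clash if this were inserted into the text.
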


The following theorem provides sharp criteria for the finiteness
of exponential moments of $\tau(x)$ and $N(x)$ in the case when
$\mmp\{X<0\}>0$.

\begin{thm}\label{main1}
Let $a>0$ and $\mmp\{X<0\}>0$. Then the following conditions are
equivalent:
\begin{align}   \label{1211}
\sum_{n \geq 1} \frac{e^{an}}{n} \mmp\{S_n\leq x\}  ~<~ \infty
& \text{ for some (hence every) }   x\geq 0;    \\
\label{1212}
\me e^{a\tau(x)}    <   \infty
& \text{ for some (hence every) }   x\geq 0;    \\
\label{1213}
\me e^{aN(x)}   <   \infty
& \text{ for some (hence every) }   x\geq 0;    \\
\label{12121}
a \leq R & :=   -\log \inf_{t\geq 0} \, \me e^{-tX}.
\end{align}
\end{thm}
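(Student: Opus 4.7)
I would prove the equivalence by closing the cycle $\eqref{12121}\Rightarrow\eqref{1211}\Rightarrow\eqref{1212}\Rightarrow\eqref{1213}\Rightarrow\eqref{12121}$, first at the reference point $x=0$ and then extending each statement to all $x\geq 0$. Note that $a>0$ combined with $a\leq R$ forces $R>0$, which by convexity of $t\mapsto \me e^{-tX}$ is equivalent to $\me X>0$; hence $S_n\to\infty$ and $\tau<\infty$ almost surely, so the random variables in question are well-defined.

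\textbf{Equivalences at $x=0$.} Sparre Andersen's identity
\[
\sum_{n\geq 0}s^n\mmp\{\tau>n\}=\exp\Bigl(\sum_{n\geq 1}\frac{s^n}{n}\mmp\{S_n\leq 0\}\Bigr),
\]
applied with $s=e^a$ and combined with summation by parts on the left-hand side, gives \eqref{1211}$\Leftrightarrow$\eqref{1212}. For \eqref{1212}$\Leftrightarrow$\eqref{1213}, one direction is immediate from \eqref{eq:tau_leq_N_leq_rho+1}; the other uses the ladder decomposition $N=(\tau-1)+\sum_{k\geq 1}V_{k+1}$, where $V_{k+1}$ counts visits of the $(k+1)$-th strict ascending ladder excursion to $(-\infty,0]$. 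Conditionally on the $k$-th ladder height $H_k$, $V_{k+1}$ is distributed as the visit count of an independent excursion to $(-\infty,-H_k]$, which has exponential tail decaying in $H_k$; combined with exponential control of $H_k$ (linear growth) and $\me e^{a\tau}<\infty$, this bounds $\me e^{aN}$. Finally, \eqref{12121}$\Leftrightarrow$\eqref{1211} splits into Chernoff $\mmp\{S_n\leq 0\}\leq e^{-Rn}$ for $a<R$ and Cramér's large-deviation lower bound $\mmp\{S_n\leq 0\}\geq e^{-(R+\varepsilon)n}$ (eventually, for every $\varepsilon>0$), which forces divergence for $a>R$.

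\textbf{Extension to general $x\geq 0$.} For \eqref{1211}, monotonicity handles one direction, and $\mmp\{S_n\leq x\}\leq e^{t^\ast x}e^{-Rn}$ (with $t^\ast$ the optimal tilt, which is interior to $(0,\infty)$ because $\mmp\{X<0\}>0$ forces $\me e^{-tX}\to\infty$ as $t\to\infty$) handles the other. For \eqref{1212}, I would iterate the renewal equation
\[
U(x)=\me[e^{a\tau}U(x-S_\tau)],\qquad U(y)=1\text{ for }y<0,
\]
to express $\me e^{a\tau(x)}$ as a series over ladder cycles, convergent once $\me e^{a\tau}<\infty$ and the tilted ladder height has enough integrability. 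For \eqref{1213}, the strong Markov decomposition $N(x)=(\tau(x)-1)+\tilde N(x-S_{\tau(x)})$, with $\tilde N(y)\leq \tilde N(0)$ for $y<0$ and $\tilde N$ independent of $\tau(x)$, reduces matters to \eqref{1212} at $x$ and \eqref{1213} at $0$.

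\textbf{Main obstacle.} The delicate step is the endpoint $a=R$ in the implication \eqref{12121}$\Rightarrow$\eqref{1211}: Chernoff alone yields only $\sum_n e^{Rn}/n\cdot e^{-Rn}=\sum_n 1/n=\infty$. The required refinement is the Bahadur--Rao asymptotic $\mmp\{S_n\leq 0\}\sim Cn^{-1/2}e^{-Rn}$, which gives the summable $\sum n^{-3/2}$; it is available precisely because the hypothesis $\mmp\{X<0\}>0$ places the minimizer $t^\ast$ at an interior point with nondegenerate tilted variance. An alternative route uses the Wiener--Hopf identity $\me[e^{R\tau}e^{-t^\ast S_\tau}]=1$ (obtained from the factorization of $1-s\me e^{-\lambda X}$) together with exponential control of the overshoot via renewal theory under the tilted measure to deduce $\me e^{R\tau}<\infty$ directly. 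Securing the critical case in this manner is the heart of the argument.
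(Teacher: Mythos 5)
Your skeleton (tilting, a Spitzer-type identity at $x=0$, renewal structure for the extension to general $x$) is broadly aligned with the paper's proof, and your treatment of $a<R$ and $a>R$ is fine. But the step you yourself identify as the heart of the argument --- the critical case $a=R$ in \eqref{12121}$\Rightarrow$\eqref{1211} --- contains a genuine error. You justify Bahadur--Rao by asserting that $\mmp\{X<0\}>0$ ``places the minimizer $t^\ast$ at an interior point''. This is false: $\mmp\{X<0\}>0$ forces $\varphi(t)=\me e^{-tX}\to\infty$ as $t\to\infty$ only when $\varphi$ is finite on all of $[0,\infty)$. If the negative tail of $X$ is such that $\varphi(t)<\infty$ exactly for $t\in[0,h]$, the infimum of $\varphi$ can be attained at the endpoint $h$ with left derivative $\varphi'(h)<0$, so that no tilt centres the walk and Bahadur--Rao does not apply. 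The paper explicitly flags this possibility and constructs such a law in its third example (densities proportional to $e^{-h|x|}/(1+|x|^r)$, shifted). The statement is still true in that boundary case, but by a different and in fact easier argument: with $\gamma_0=h$ the tilted walk has strictly positive drift, and $e^{Rn}\mmp\{S_n\leq x\}=\me_{\gamma_0}e^{\gamma_0 S_n}\1_{\{S_n\leq x\}}\leq e^{\gamma_0 x}\mmp_{\gamma_0}\{S_n\leq x\}$, which is already summable in $n$ since $\me_{\gamma_0}N(x)<\infty$. So your proof needs a case split that it currently lacks; in the interior case Bahadur--Rao (with tilted variance finite because $\varphi$ is then finite in a neighbourhood of $t^\ast$) does work. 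The paper avoids the dichotomy by integrating $e^{\gamma y}$ against the harmonic renewal measure $Z_\gamma(A)=\sum_{n\geq1}n^{-1}\mmp_\gamma\{S_n\in A\}$ and invoking Alsmeyer's results ($Z_\gamma((-\infty,0])<\infty$ under positive tilted drift; sublinear growth of $Z_\gamma((-x,0])$ under zero tilted drift), which requires no local limit theorem at all.

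A second, smaller issue is your \eqref{1212}$\Rightarrow$\eqref{1213} at $x=0$. The excursion decomposition $N=(\tau-1)+\sum_k V_{k+1}$ with ``exponential tail decaying in $H_k$'' is only a sketch: to bound $\me e^{aN}$ you must control the product $\prod_k\me[e^{aV_{k+1}}\mid H_k]$, and the bound on each factor you would want ($\me e^{aN(-y)}-1\leq Ce^{-cy}$) presupposes finiteness of exponential moments of visit counts, i.e.\ something close to what is being proved. The paper sidesteps all of this with the Kesten--Maller identity $\mmp\{N=k\}=\mmp\{\inf_{n\geq1}S_n>0\}\,\mmp\{\tau>k\}$, which makes $\me e^{aN}<\infty$ immediate from $\me e^{a\tau}<\infty$, and then uses $N(x)\leq\tau(x)+\widehat N(x,0)$ with $\widehat N(x,0)$ an independent copy of $N$ to pass to general $x$. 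I would recommend adopting that identity (or proving your excursion estimates in full) and restructuring the endpoint case as described above.
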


Our next theorem provides the corresponding result for the last exit time $\rho(x)$.

\begin{thm} \label{main2}
Let $a>0$ and $\mmp\{X<0\}>0$. Then the following conditions are
equivalent:
\begin{align}   \label{21211}
\sum_{n \geq 0} e^{an} \mmp\{S_n\leq x\}<\infty
& \text{ for some (hence every) }   x\geq 0;    \\
\label{1214}
\me e^{a\rho(x)}    <   \infty
& \text{ for some (hence every) }   x \geq 0;   \\
\label{212121} a < R = -\log \inf_{t\geq 0}\,\me e^{-tX} \quad &
\text{or}   \quad a=R \text{ and } \me X e^{-\gamma_0 X} > 0
\end{align}
where $\gamma_0$ is the unique positive number such that $\me
e^{-\gamma_0 X} = e^{-R}$.
\end{thm}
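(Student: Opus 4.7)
The plan proves the three-way equivalence in two phases: first the elementary link between \eqref{21211} and \eqref{1214}, then the core link between \eqref{21211} and \eqref{212121} via exponential tilting and renewal-theoretic analysis of the tilted walk.

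For the equivalence of \eqref{21211} and \eqref{1214}, I would use the standard identity $\me e^{a\rho(x)} = 1 + (1-e^{-a})\sum_{k\geq 1} e^{ak}\mmp\{\rho(x)\geq k\}$ together with the sandwich
$$\mmp\{S_k\leq x\} ~\leq~ \mmp\{\rho(x)\geq k\} ~\leq~ \sum_{m\geq k}\mmp\{S_m\leq x\},$$
where the right-hand inequality is a union bound. The left-hand inequality yields the implication \eqref{1214} $\Rightarrow$ \eqref{21211}, while a Fubini interchange applied to the right-hand inequality gives the reverse.

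The heart of the proof is the equivalence of \eqref{21211} and \eqref{212121}. For $a<R$, a Chernoff bound with parameter $\gamma_0$ yields $\mmp\{S_n\leq x\}\leq e^{\gamma_0 x-Rn}$, so the sum in \eqref{21211} converges geometrically. For $a=R$, I would pass to the exponentially tilted probability $\widetilde{\mmp}$ defined by $d\widetilde{\mmp}/d\mmp|_{\sigma(X_1,\ldots,X_n)}=e^{Rn-\gamma_0 S_n}$, under which $(\widetilde{S}_n)$ is a random walk with step law $e^{R-\gamma_0 y}\mmp\{X\in dy\}$ and drift $\widetilde{\me}\widetilde{X}=e^{R}\me Xe^{-\gamma_0 X}$. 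The identity
$$\sum_{n\geq 0} e^{Rn}\mmp\{S_n\leq x\} ~=~ \widetilde{\me}\sum_{n\geq 0} e^{\gamma_0 \widetilde{S}_n}\1_{\{\widetilde{S}_n\leq x\}}$$
reduces the problem to the behaviour of the tilted renewal measure $\widetilde{U}(x):=\sum_n \widetilde{\mmp}\{\widetilde{S}_n\leq x\}$. When $\me X e^{-\gamma_0 X}>0$, the tilted walk drifts to $+\infty$, so $\widetilde{U}(x)$ is finite and the upper bound $e^{\gamma_0 x}\widetilde{U}(x)$ closes convergence.

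For the converse direction, convexity of $\psi(t):=\me e^{-tX}$ at its minimizer $\gamma_0$ forces $\me X e^{-\gamma_0 X}\geq 0$, so only the subcases $a>R$ and $a=R$, $\me X e^{-\gamma_0 X}=0$ remain. The case $a>R$ follows from Cram\'er's lower bound $\liminf_n n^{-1}\log\mmp\{S_n\leq x\}\geq -R$, which forces the $n$-th term of the series to grow exponentially. The delicate case $a=R$ with $\me X e^{-\gamma_0 X}=0$ is the main obstacle: here $\gamma_0$ is necessarily an interior minimum of $\psi$, so $\widetilde{\me}\widetilde{X}=0$ and $\widetilde{\me}\widetilde{X}^2=e^R\psi''(\gamma_0)<\infty$. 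Chung--Fuchs recurrence of the centered, finite-variance walk $\widetilde{S}_n$ then yields $\sum_n \widetilde{\mmp}\{0\leq \widetilde{S}_n\leq x\}=\infty$, and since $e^{\gamma_0 \widetilde{S}_n}\geq 1$ on $\{\widetilde{S}_n\geq 0\}$, the right-hand side of the key identity diverges. Identifying this interior-versus-boundary dichotomy for the minimum of $\psi$ and handling the interior subcase via Chung--Fuchs is the main technical hurdle.
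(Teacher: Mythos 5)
Your reduction of \eqref{1214} to \eqref{21211} via the sandwich $\mmp\{S_k\leq x\}\leq\mmp\{\rho(x)\geq k\}\leq\sum_{m\geq k}\mmp\{S_m\leq x\}$ is correct and pleasantly self-contained; note that the paper takes a different route here (it cites \eqref{21211} $\Leftrightarrow$ \eqref{212121} from Iksanov--Meiners, deduces \eqref{21211} $\Rightarrow$ \eqref{1214} from the exact formula $\mmp\{\rho(x)=n\}=\int_{(-\infty,x]}\mmp\{\inf_{k\geq 1}S_k>x-y\}\,\mmp\{S_n\in{\rm d}y\}\leq\mmp\{S_n\leq x\}$, and proves \eqref{1214} $\Rightarrow$ \eqref{212121} directly by showing that finiteness of $\me e^{R\rho}$ forces the tilted walk to be transient). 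Your Cram\'er lower bound for $a>R$ and your Chung--Fuchs argument for $a=R$, $\me Xe^{-\gamma_0X}=0$ are essentially sound.

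However, there is a genuine gap in the direction \eqref{212121} $\Rightarrow$ \eqref{21211} in the boundary case $a=R$, $\me Xe^{-\gamma_0X}>0$. You claim that positive drift of the tilted walk makes $\widetilde{U}(x)=\sum_n\widetilde{\mmp}\{\widetilde{S}_n\leq x\}$ finite. This is false in general: for a random walk with positive finite drift, the expected number of visits to a half-line $(-\infty,x]$ is finite if and only if the second moment of the negative part of the step is finite (Kesten--Maller, and classically via $\me N(x)<\infty\Leftrightarrow\me\tau^2<\infty\Leftrightarrow\me(X^-)^2<\infty$). Here $\widetilde{\me}(\widetilde{X}^-)^2=e^R\me(X^-)^2e^{\gamma_0X^-}$, and since in this case $\gamma_0$ is the right endpoint of $\{t:\varphi(t)<\infty\}$, this quantity can be infinite even though $\me X^-e^{\gamma_0X^-}<\infty$ and $\me Xe^{-\gamma_0X}>0$ (take $\mmp\{X^->t\}\sim ce^{-\gamma_0t}t^{-\beta}$ with $2<\beta\leq 3$). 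In such cases $\widetilde{U}(x)=\infty$ and your bound $e^{\gamma_0x}\widetilde{U}(x)$ is vacuous, although the series in \eqref{21211} does converge. The repair is to not discard the exponential weight: write $\sum_ne^{Rn}\mmp\{S_n\leq x\}=\int_{(-\infty,x]}e^{\gamma_0y}\,\widetilde{U}({\rm d}y)$ and use that any transient random walk has $\sup_z\widetilde{U}((z,z+1])<\infty$, so that the integral is dominated by $Ce^{\gamma_0x}\sum_{k\geq 0}e^{-\gamma_0k}<\infty$. A secondary inaccuracy: when $\me Xe^{-\gamma_0X}=0$, the minimizer $\gamma_0$ need not be interior to the domain of $\varphi$ (it can be the right endpoint with vanishing left derivative), so $\psi''(\gamma_0)$ may be infinite; this is harmless because mean zero with finite first moment already gives recurrence via Chung--Fuchs, but your appeal to finite variance should be dropped.
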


Now we turn our attention to the asymptotic behaviour of $\me
e^{a\tau(x)}$, $\me e^{aN(x)}$ and $\me e^{a\rho(x)}$ and start by
recalling a known result which, given in other terms, can be found
in \cite[Theorem 2.2]{IksMei}. In view of equality \eqref{xyz} we
only state it for $\me e^{a\tau(x)}$. The phrase `$X$
is $\lambda$-lattice' used in formulations of Proposition
\ref{pos} and Theorem \ref{Thm:asymptotics} is a shorthand for
`The law of $X$ is lattice with span $\lambda>0$'.
\begin{assertion}\label{pos}
Let $a>0$, $\mmp\{X\geq 0\}=1$ and $\mmp\{X=0\}<1$. Assume that
$\me e^{a\tau(x)}<\infty$ for some (hence every) $x\geq 0$. Then,
as $x \to \infty$,
    \begin{equation*}    \label{eq:posasymptotics_Ee^atau(x)}
    \me e^{a \tau(x)}   ~\sim~ e^{\gamma x} \, \times \,
    \begin{cases}
        \frac{1-e^{-a}}{\gamma \me Xe^{-\gamma X}},
        &   \text{if $X$ is non-lattice,}   \\
         \frac{\lambda(1-e^{-a})}{(1-e^{-\lambda\gamma})\me Xe^{-\gamma
         X}},
        &   \text{if $X$ is $\lambda$-lattice}
    \end{cases}
    \end{equation*}
where $\gamma$ is a unique positive number such that $\me
e^{-\gamma X}=e^{-a}$, and in the $\lambda$-lattice case the limit
is taken over $x \in \lambda \mn$.
\end{assertion}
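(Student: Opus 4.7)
The plan is to reduce $\me e^{a\tau(x)}$ to a single exponentially-weighted renewal-type series, strip the weights by an Esscher change of measure, and then apply the key renewal theorem to the tilted walk.

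Since $X \geq 0$ almost surely, $(S_n)_{n \geq 0}$ is non-decreasing, so $\{\tau(x) > n\} = \{S_n \leq x\}$ for every $n \geq 0$. Summation by parts applied to the $\mn_0$-valued random variable $\tau(x)$ then yields
\begin{equation*}
\me e^{a\tau(x)} ~=~ 1 + (e^a - 1) U(x), \qquad U(x) ~:=~ \sum_{n \geq 0} e^{an}\, \mmp\{S_n \leq x\},
\end{equation*}
so the task reduces to finding the asymptotic of $U(x)$.

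Next, I introduce the tilted probability law on $[0, \infty)$ given by $\tilde{\mmp}\{X \in dy\} := e^{a - \gamma y}\, \mmp\{X \in dy\}$, which integrates to $1$ because $\me e^{-\gamma X} = e^{-a}$. Let $\tilde{X}$ have this law, let $(\tilde{S}_n)_{n \geq 0}$ be the corresponding zero-delayed random walk, and let $\tilde{U}$ be its renewal measure. The mean of $\tilde{X}$ equals $\tilde{\mu} := e^a\, \me X e^{-\gamma X}$, which is finite (since $y e^{-\gamma y}$ is bounded on $[0, \infty)$) and positive (since $\mmp\{X = 0\} < 1$ and $\gamma > 0$); Proposition \ref{main0} also gives $e^a \mmp\{X = 0\} < 1$, so that $\tilde{\mmp}\{\tilde{X} = 0\} < 1$ and $\tilde{U}$ is locally finite. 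A direct $n$-fold convolution calculation shows $e^{an}\, \mmp\{S_n \in dy\} = e^{\gamma y}\, \mmp\{\tilde{S}_n \in dy\}$, and summing over $n$ yields
\begin{equation*}
U(x) ~=~ \int_{[0, x]} e^{\gamma y}\, \tilde{U}(dy) ~=~ e^{\gamma x} \int_{[0, \infty)} g(x - y)\, \tilde{U}(dy), \qquad g(z) := e^{-\gamma z}\, \1_{\{z \geq 0\}}.
\end{equation*}

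In the non-lattice case, $\tilde{X}$ inherits non-lattice support from $X$, and $g$ is continuous, non-increasing and integrable, hence directly Riemann integrable; the key renewal theorem therefore gives $\int g(x - y)\, \tilde{U}(dy) \to \tilde{\mu}^{-1} \int_0^\infty e^{-\gamma z}\, dz = e^{-a}/(\gamma\, \me X e^{-\gamma X})$, and using $(e^a - 1)e^{-a} = 1 - e^{-a}$ yields the stated asymptotic. In the $\lambda$-lattice case, $\tilde{X}$ is $\lambda$-lattice, Blackwell's theorem supplies $\tilde{U}(\{k\lambda\}) \to \lambda/\tilde{\mu}$, and evaluating $U(m\lambda) = \sum_{k=0}^m e^{\gamma k \lambda}\, \tilde{U}(\{k\lambda\})$ by factoring out $e^{\gamma m \lambda}$ and applying dominated convergence to $\sum_{j=0}^m e^{-\gamma j \lambda}\, \tilde{U}(\{(m-j)\lambda\})$ produces the corresponding lattice constant $\lambda/((1 - e^{-\gamma\lambda})\, \me X e^{-\gamma X})$. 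The only technical points are the verification of direct Riemann integrability of $g$ for the non-lattice application of the key renewal theorem and the dominated-summation step in the lattice case; both are essentially routine once the convolution form of $U(x)$ is in place.
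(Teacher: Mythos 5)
Your argument is correct, and it is essentially the route the paper takes: the paper itself does not reprove Proposition \ref{pos} (it quotes it from \cite[Theorem 2.2]{IksMei}), but its proof of the general Theorem \ref{Thm:asymptotics}(a) rests on exactly the same two ingredients you use, namely the identity $\me e^{a\tau(x)}=1+(e^a-1)\sum_{n\ge 0}e^{an}\mmp\{\tau(x)>n\}$ followed by the exponential tilting \eqref{mea} and the key renewal theorem. The only difference is that with $\mmp\{X\ge 0\}=1$ you can replace $\{\tau(x)>n\}=\{M_n\le x\}$ by $\{S_n\le x\}$ and so dispense with the ladder-epoch decomposition \eqref{eq:Doney}; your verification of local finiteness of $\tilde U$ via $e^a\mmp\{X=0\}<1$ and of $0<\tilde\mu<\infty$ covers the needed hypotheses.
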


Let $$\varphi: [0,\infty)~\to ~ (0,\infty], \ \ \varphi(t):=\me
e^{-tX}$$ be the Laplace transform of $X$. When $0 < a \leq R$ and
$\mmp\{X < 0\}
> 0$, there exists a minimal $\gamma>0$\label{222} such that
$\varphi(\gamma) = e^{-a}$. This $\gamma$ can be used to define a
new probability measure $\mmp_\gamma$ by
\begin{equation}    \label{mea}
\me_\gamma h(S_0,\ldots, S_n)
~=~ e^{an} \me e^{-\gamma S_n}h(S_0,\ldots, S_n),
\quad n\in\mn,
\end{equation}
for each nonnegative Borel function $h$ on $\mr^{n+1}$, where $\me_{\gamma}$ denotes expectation with respect to $\mmp_{\gamma}$. Since $\me_{\gamma} X = \me_{\gamma} S_1 = -e^a \varphi'(\gamma)$ (where $\varphi'$ denotes the left derivative of $\varphi$) and since $\varphi$ is decreasing and convex on $[0,\gamma]$, there are only two possibilities:
\begin{equation}    \label{eq:E_{gamma}X}
\text{Either}   \quad   \me_\gamma X \in (0,\infty)
\quad   \text{or}   \quad
\me_\gamma X=0.
\end{equation}
When $a<R$, then the first alternative in \eqref{eq:E_{gamma}X}
prevails. When $a=R$, then typically $\varphi'(\gamma)=0$ since
$\gamma$ is then unique minimizer of $\varphi$ on $[0,\infty)$. In
particular, $\me_{\gamma} X = 0$. But even if $a=R$ it can occur
that $\me_{\gamma} X > 0$ or, equivalently, $\varphi'(\gamma) <
0$. Of course, then $\gamma$ is the right endpoint of the interval
$\{t\geq 0: \varphi(t)<\infty\}$. We provide an example of this
situation in Section \ref{sec:apps_and_exs}.

Now we are ready to formulate the last result of the paper.
\begin{thm} \label{Thm:asymptotics}
Let $a>0$ and $\mmp\{X<0\}>0$.

\begin{itemize}
    \item[(a)] Assume that
$\me e^{a\tau(x)}<\infty$ for some (hence every) $x\geq 0$. Then
$\me_{\gamma} S_{\tau}$ is positive and finite, and, as $x \to
\infty$,
    \begin{equation}    \label{eq:asymptotics_Ee^atau(x)}
    \me e^{a \tau(x)}   ~\sim~ e^{\gamma x} \, \times \,
    \begin{cases}
        \frac{\me (e^{a\tau} - 1)}{\gamma \me_{\gamma} S_{\tau}},
        &   \text{if $X$ is non-lattice,}   \\
        \frac{\lambda \me (e^{a\tau} - 1)}{(1-e^{-\lambda\gamma}) \me_{\gamma}
        S_{\tau}},
        &   \text{if $X$ is $\lambda$-lattice.}
    \end{cases}
    \end{equation}
       \item[(b)]
       Assume that
$\me e^{aN(x)}<\infty$ for some (hence every) $x\geq 0$. Then
$\me_{\gamma} S_{\tau}$ is positive and finite, and, as $x \to
\infty$,
    \begin{equation}    \label{eq:asymptotics_Ee^aN(x)}
    \me e^{a N(x)}   ~\sim~ e^{\gamma x} \, \times \,
    \begin{cases}
        \frac{e^{-a} \me_\gamma \int_0^{S_\tau} e^{\gamma y}\me[e^{aN(-y)}] \, {\rm d}y}{\me_\gamma S_\tau},
        &   \text{if $X$ is non-lattice,}   \\
        \frac{\lambda e^{-a} \me_\gamma \sum_{k=1}^{S_\tau/\lambda}
        e^{\gamma \lambda k}\me [e^{aN(-\lambda k)}]}{\me_\gamma S_\tau},
        &   \text{if $X$ is $\lambda$-lattice.}
    \end{cases}
    \end{equation}
    \item[(c)] Assume that
$\me e^{a\rho(x)}<\infty$ for some (hence every) $x\geq 0$. Then
$M := \inf_{n \geq 1} S_n$ is positive with positive probability,
and, as $x \to \infty$,
    \begin{multline}    \label{eq:asymptotics_Ee^arho(x)}
\me e^{a \rho(x)}   ~\sim~ e^{\gamma x} \, \times \,
\begin{cases}
\frac{e^{-a}(1-\me e^{-\gamma M^+})}{\gamma \me X e^{-\gamma X}},
&
\text{if $X$ is non-lattice,}   \\
\frac{\lambda e^{-a}(1-\me e^{-\gamma
M^+})}{(1-e^{-\lambda\gamma}) \me X e^{-\gamma X}}, & \text{if $X$ is $\lambda$-lattice.}
\end{cases}
\end{multline}
\end{itemize}
In the $\lambda$-lattice case the limit is taken over $x \in
\lambda \mn$.
\end{thm}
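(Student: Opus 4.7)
The plan is to derive all three asymptotics from one common device: use the exponential tilt $\mmp_\gamma$ from \eqref{mea} to transform each exponential moment into a quantity governed by a renewal equation under $\mmp_\gamma$, and then invoke Smith's key renewal theorem in the non-lattice case (the Erd\H{o}s--Feller--Pollard theorem in the $\lambda$-lattice case). The workhorse identity is $\me[e^{an}Y]=\me_\gamma[e^{\gamma S_n}Y]$ for $Y\in\mathcal F_n$, which extends by summation over $\{\tau=n\}$ to $\me[e^{a\tau}F(S_0,\ldots,S_\tau)]=\me_\gamma[e^{\gamma S_\tau}F(S_0,\ldots,S_\tau)]$ once one knows $\mmp_\gamma\{\tau<\infty\}=1$, which holds either because $\me_\gamma X>0$ or, at $a=R$ with $\me_\gamma X=0$, by recurrence of the tilted walk. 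Two auxiliary facts will be needed: first, $\me_\gamma S_\tau\in(0,\infty)$, with positivity from $S_\tau>0$ $\mmp_\gamma$-a.s.\ and finiteness from $\me_\gamma S_\tau=\me[S_\tau e^{a\tau-\gamma S_\tau}]\leq (e\gamma)^{-1}\me e^{a\tau}<\infty$ via the elementary bound $ye^{-\gamma y}\leq (e\gamma)^{-1}$; second, for part (c), $\mmp\{M>0\}>0$, which follows from the fact that $\me e^{a\rho(x)}<\infty$ forces $\rho(x)<\infty$ a.s.\ and hence $S_n\to\infty$ a.s.

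For part (a), the change-of-measure identity immediately gives $e^{-\gamma x}\me e^{a\tau(x)}=\me_\gamma e^{\gamma(S_{\tau(x)}-x)}=:L(x)$. Conditioning on the first $\mmp_\gamma$-ladder height $H:=S_\tau$ yields the renewal equation $L(x)=g_1(x)+\int_{[0,x]}L(x-y)\,\mathbf{F}_H(dy)$ with $g_1(x):=e^{-\gamma x}\me_\gamma[e^{\gamma H}\1_{H>x}]$. Since $g_1$ is non-negative and decreasing, and $\int_0^\infty g_1(y)\,dy=\gamma^{-1}\me_\gamma(e^{\gamma H}-1)=\gamma^{-1}\me(e^{a\tau}-1)$ by Fubini, it is directly Riemann integrable, and the key renewal theorem delivers the stated non-lattice formula.

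For part (b), decomposing at the first ladder epoch one has $N(x)=\tau-1+N_0'(x-S_\tau)$, where $N_0(\cdot):=\sum_{n\geq 0}\1_{S_n\leq\cdot}$ and $N_0'$ is an independent copy; taking exponentials, expectation, and tilting produces
\begin{equation*}
V(x)~=~\me_\gamma\bigl[V(x-S_\tau)\1_{S_\tau\leq x}\bigr]+g_2(x), \qquad V(x):=e^{-\gamma x}\,\me e^{aN(x)},
\end{equation*}
with $g_2(x):=e^{-a}\me_\gamma\bigl[e^{\gamma(S_\tau-x)}\me e^{aN(x-S_\tau)}\1_{S_\tau>x}\bigr]$. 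Since $\me e^{aN(-y)}\leq \me e^{aN(0)}<\infty$ for $y\geq 0$, $g_2$ is dominated by the decreasing, integrable function $e^{-a}\me e^{aN(0)}\cdot \me_\gamma[e^{\gamma(S_\tau-x)}\1_{S_\tau>x}]$, hence is directly Riemann integrable, and the key renewal theorem followed by the substitution $u=S_\tau-y$ inside $\me_\gamma$ (with Fubini) yields the displayed integral. For part (c), expanding $\me e^{a\rho(x)}=\sum_{n\geq 0}e^{an}\mmp\{\rho(x)=n\}$ via $\{\rho(x)=n\}=\{S_n\leq x,\,\inf_{k\geq 1}(S_{n+k}-S_n)>x-S_n\}$ and the strong Markov property, then applying the change of measure, gives
\begin{equation*}
\me e^{a\rho(x)}~=~\me_\gamma\sum_{n\geq 0}e^{\gamma S_n}\bar F_M(x-S_n)\1_{S_n\leq x},\qquad \bar F_M(y):=\mmp\{M>y\};
\end{equation*}
the function $f(y):=e^{-\gamma y}\bar F_M(y)\1_{y\geq 0}$ is non-negative, non-increasing, with $\int_0^\infty f(y)\,dy=\gamma^{-1}(1-\me e^{-\gamma M^+})$, hence dRi, and the key renewal theorem applied to $(S_n)$ under $\mmp_\gamma$ (whose drift $\me_\gamma X=e^a\me Xe^{-\gamma X}>0$ by Theorem \ref{main2}) produces the non-lattice formula in (c) after substituting $\me_\gamma X=e^a\me Xe^{-\gamma X}$. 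The $\lambda$-lattice versions of all three parts follow identically using the Erd\H{o}s--Feller--Pollard theorem, $\gamma^{-1}$ being replaced by $\lambda/(1-e^{-\lambda\gamma})$ throughout.

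The main technical hurdles are the direct Riemann integrability verifications for $g_1$, $g_2$ and $f$; the careful extension of the change of measure from $\mathcal F_n$ to the random time $\tau$, which uses $\mmp_\gamma\{\tau<\infty\}=1$ in an essential way; and the handling of the case $a=R$ with $\me_\gamma X=0$ in (a) and (b), where the tilted walk is only recurrent and the renewal argument must be based on the ladder process $(H_1+\cdots+H_k)$ (whose mean $\me_\gamma S_\tau$ is still positive and finite) rather than on $(S_n)$ itself.
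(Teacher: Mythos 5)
Your proof is correct in substance but follows a genuinely different route from the paper's in each part, so a comparison is in order. In part (a) the paper does not set up a renewal equation by conditioning on the first ladder height; it instead quotes Doney's identity $\sum_{n\ge0}e^{an}\mmp\{M_n\le x\}=\frac{\me e^{a\tau}-1}{e^a-1}\sum_{j\ge0}e^{aj}\mmp\{L_j=j,\,S_j\le x\}$, identifies the indices with $L_j=j$ as ladder epochs, and lands on the same object you do, namely the convolution of $e^{-\gamma y}\1_{[0,\infty)}(y)$ with the renewal measure $U^>_\gamma$ of the ladder heights under $\mmp_\gamma$; the two derivations are essentially equivalent, but your finiteness argument for $\me_\gamma S_\tau$ via $\me_\gamma S_\tau=\me\bigl[S_\tau e^{a\tau}e^{-\gamma S_\tau}\bigr]\le(e\gamma)^{-1}\me e^{a\tau}$ is a genuine (and welcome) simplification: the paper instead splits into the cases $\me_\gamma X>0$ (Wald's identity) and $\me_\gamma X=0$ (Doney's 1980 moment criterion for ladder heights, fed by $\me_\gamma(S_1^+)^2<\infty$). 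In part (b) the paper avoids a second renewal equation altogether: it writes $e^{-\gamma x}\me e^{aN(x)}=e^{-a}\me_\gamma e^{\gamma R_x}f(-R_x)$ with $R_x$ the overshoot, uses stationary overshoot convergence (available since $0<\me_\gamma S_\tau<\infty$ by part (a)) plus a coupling and Pratt's lemma to pass to the limit, which sidesteps any direct Riemann integrability verification. Your route requires $g_2$ to be dRi, and domination by a decreasing integrable function is not by itself sufficient; you also need almost-everywhere continuity of $g_2$, which does hold here because $y\mapsto\me e^{aN(y)}$ is monotone and hence has only countably many discontinuities, but this should be said explicitly. In part (c) the paper does not invoke the key renewal theorem for the two-sided tilted walk; it imports the asymptotics $V(x)\sim D_1e^{\gamma x}$ from its companion paper and runs an elementary $\varepsilon$-sandwich on $\int_{(0,\infty)}V(x-y)\,\mmp\{M\in{\rm d}y\}$, whereas you apply the key renewal theorem directly to $\sum_n\me_\gamma f(x-S_n)$ with $f(y)=e^{-\gamma y}\mmp\{M>y\}\1_{\{y\ge0\}}$ — cleaner, but it presupposes the whole-line renewal theorem for walks with positive finite drift and the integrability $\me_\gamma|X|<\infty$, which is guaranteed here by Theorem \ref{main2} since $\me_\gamma X=e^a\me Xe^{-\gamma X}\in(0,\infty)$ in all cases where $\me e^{a\rho(x)}<\infty$.
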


The rest of the paper is organized as follows. Section
\ref{sec:proofs} is devoted to the proofs of Theorems \ref{main1},
\ref{main2} and \ref{Thm:asymptotics}. In Section
\ref{sec:apps_and_exs} we provide three examples illustrating our
main results.

\section{Proofs of the main results}    \label{sec:proofs}

\begin{proof}[Proof of Theorem \ref{main1}]
\eqref{12121} $\Rightarrow$ \eqref{1211}.
Pick any $a\in (0,R]$ and let $\gamma$ be as defined on
p.~\pageref{222}. With this $\gamma$, the equality
\begin{equation*}
Z_\gamma(A) ~:=~    \sum_{n\geq1} \frac{\mmp_\gamma\{S_n\in A\}}{n}
\end{equation*}
where $A\subset \mr$ is a Borel set, defines a measure which is
finite on bounded intervals. Furthermore, according to
\cite[Proposition 1.1 and Theorem 1.2]{Als}, if $\me_\gamma X>0$
then $Z_\gamma((-\infty, 0])<\infty$, whereas if $\me_\gamma X=0$
(this may only happen if $a=R$), then the function $x\mapsto
Z_\gamma((-x,0])$, $x>0$, is of sublinear growth. Hence, for every
$x\geq 0$,
\begin{equation*}
\sum_{n \geq 1} \frac{e^{an}}{n} \mmp\{S_n\leq x\} ~=~ \sum_{n
\geq 1} \frac{1}{n} \me_\gamma e^{\gamma S_n} \1_{\{S_n\leq x\}}
~=~ \underset{(-\infty,x]}{\int} \!\!\! e^{\gamma y} \,
Z_\gamma({\rm d}y) ~<~ \infty.
\end{equation*}

\noindent \eqref{1211} $\Rightarrow$ \eqref{12121}. Suppose
\eqref{1211} holds for some $x=x_0\geq 0$ and $a>R$. Pick
$\varepsilon\in (0,a-R)$. Then $\sum_{n \geq
0}e^{(a-\varepsilon)n} \mmp\{S_n \leq x_0\}<\infty$ which is a
contradiction to \cite[Theorem 2.1(aiii)]{IksMei} (reproduced here
as equivalence \eqref{21211} $\Leftrightarrow$ \eqref{212121} of
Theorem \ref{main2}).

\noindent \eqref{1211} $\Rightarrow$ \eqref{1212}. The argument
given below will also be used in the proof of Theorem
\ref{Thm:asymptotics}.

If \eqref{1211} holds for some $x \geq 0$ then, according to the
already proved equivalence \eqref{1211} $\Leftrightarrow$
\eqref{12121}, first, $a \leq R$ and, secondly, \eqref{1211} holds
for every $x \geq 0$. For $0 < a \leq R$ and $x \geq 0$, we have
\begin{eqnarray}
\me e^{a\tau(x)}
& = &
1 + (e^a-1) \sum_{n \geq 0} e^{an} \mmp\{\tau(x) > n\}  \notag  \\
& = &
1 + (e^a-1) \sum_{n \geq 0} e^{an} \mmp\{M_n \leq x\}  \label{eq:Ee^atau(x)}
\end{eqnarray}
where $M_n:=\max_{0\leq k\leq n} S_k$, $n \in \mn_0$.
According to \cite[Formula (2.9)]{Doney1989},
\begin{equation}
\sum_{n \geq 0} e^{an} \mmp\{M_n \leq x\}~ = ~
\frac{\me e^{a\tau} - 1}{e^a-1} \, \sum_{j \geq 0} e^{aj}
\mmp\{L_j = j, S_j \leq x\}  \label{eq:Doney}
\end{equation}
where $L_j = \inf\{i \in \mn_0: S_i = M_j\}$, $j \in \mn_0$.
Since $a \leq R$, we can use the exponential measure
transformation introduced in \eqref{mea}, which gives
\begin{equation*}
e^{aj} \mmp\{L_j = j, S_j \leq x\} ~=~ \me_{\gamma} e^{\gamma S_j} \1_{\{L_j = j, S_j \leq x\}}.
\end{equation*}
Observe that $L_j = j$ holds iff $j = \sigma_k$ for some
$k\in\mn_0$ where $\sigma_k$ $(\sigma_0:=0)$ denotes the $k$th
strictly ascending ladder epoch of the random walk $(S_n)_{n \geq
0}$. Thus,
\begin{align}
\sum_{j\geq0} e^{aj} & \mmp\{L_j = j, S_j \leq x\}
~=~ \sum_{j\geq0} \me_{\gamma} e^{\gamma S_j} \1_{\{L_j = j, S_j \leq x\}}  \notag  \\
& =~
\sum_{j\geq0} \me_{\gamma} \sum_{k \geq 0} e^{\gamma S_{\sigma_k}} \1_{\{\sigma_k = j, S_{\sigma_k} \leq x\}}
~=~ \me_{\gamma} \sum_{k \geq 0} e^{\gamma S_{\sigma_k}} \1_{\{S_{\sigma_k} \leq x\}}   \notag  \\
& =~ e^{\gamma x} \int_\mr e^{-\gamma(x-y)} \1_{[0,\infty)}(x-y)
\, U^>_{\gamma}({\rm d}y) ~=:~    e^{\gamma x} Z^>_\gamma(x)
\label{eq:Z(x)}
\end{align}
where $U^>_{\gamma}$ denotes the renewal function of the random
walk $(S_{\sigma_k})_{k \geq 0}$ under $\mmp_{\gamma}$, that is,
$U^>_{\gamma}(\cdot) = \sum_{k \geq 0} \mmp_\gamma\{S_{\sigma_k}
\in \cdot\}$. Thus, $Z^>_\gamma(x)$ is finite for all $x \geq 0$
since it is the integral of a directly Riemann integrable
function with respect to $U^>_{\gamma}$.

\noindent
\eqref{1212} $\Rightarrow$ \eqref{1211} and \eqref{1213} $\Rightarrow$ \eqref{1211}.
Since $\tau(y) \leq N(y)+1$, $y \geq 0$, it suffices to prove the first implication.
To this end, let
\begin{equation*}    \label{eq:K(a)}
K(a)    ~:=~    \sum_{n \geq 1} \frac{e^{an}}{n} \mmp\{S_n \leq 0\}.
\end{equation*}
By a generalization of Spitzer's formula \cite[Formula
(2.6)]{Doney1989}, the assumption $\me e^{a\tau} < \infty$
immediately entails the finiteness of $K(a)$:
\begin{equation*}
\infty  ~>~ \me e^{a\tau}   ~=~ 1+(e^a-1) \sum_{n \geq 0} e^{an} \mmp\{M_n=0\}
~=~ 1+(e^a-1)e^{K(a)}.
\end{equation*}
We already know that if the series in \eqref{1211} converges for $x=0$,
\textit{i.e.}, if $K(a)<\infty$, then it converges for every $x \geq 0$.

\noindent
\eqref{1212} $\Rightarrow$ \eqref{1213}.
By the equivalence \eqref{1211} $\Leftrightarrow$ \eqref{1212},
$\me e^{a\tau(x)}<\infty$ for every $x\geq 0$. According to
\cite[Formula (3.54)]{KestMaller},
\begin{equation*}
\mmp\{N=k\}  ~=~ \mmp\{\inf_{n \geq 1} S_n>0\} \mmp\{\tau>k\}, \ \
k\in\mn_0,
\end{equation*}
where $\mmp\{\inf_{n \geq 1} S_n>0\}>0$, since, under the present
assumptions, $(S_n)_{n\geq 0}$ drifts to $+\infty$ a.s. Hence,
$\me e^{aN}<\infty$. Further, for $y\in\mr$,
\begin{equation}    \label{eq:hatN}
\widehat{N}(x,y)    ~:=~    \sum_{n>\tau(x)} \1_{\{S_n-S_{\tau(x)} \leq y\}}
\end{equation}
is a copy of $N(y)$ that is independent of $(\tau(x), S_{\tau(x)})$. We have
\begin{equation}    \label{eq:N_decomposed}
N(x)    ~=~ \tau(x)-1 + \widehat{N}(x,x-S_{\tau(x)})    ~\leq~ \tau(x) + \widehat{N}(x,0)
\end{equation}
Hence, $\me e^{aN(x)} < \infty$, for every $x\geq 0$. The proof
is complete.
\end{proof}

\begin{proof}[Proof of Theorem \ref{main2}]
The equivalence \eqref{21211} $\Leftrightarrow$ \eqref{212121} has been proved
in \cite[Theorem 2.1]{IksMei}.

\noindent
\eqref{21211} $\Rightarrow$ \eqref{1214}.
According to the just mentioned equivalence,
if \eqref{21211} holds for some $x\geq 0$
it holds for every $x\geq 0$. It remains to note that for $x \geq 0$
\begin{equation}    \label{eq:rho(x)=n}
\mmp\{\rho(x)=n\}
~=~ \underset{(-\infty,x]}{\int} \! \mmp\{\inf_{k \geq 1} S_k > x-y\} \, \mmp\{S_n \in {\rm d}y\}
~\leq~  \mmp\{S_n\leq x\}.
\end{equation}

\noindent \eqref{1214} $\Rightarrow$ \eqref{212121}. Suppose $\me
e^{a\rho(x_0)} < \infty$ for some $x_0\geq 0$ and $a>0$. Since
$\me e^{a\rho(x)}$ is increasing in $x$, we have $\me e^{a\rho} <
\infty$. Condition $a \leq R$ must hold in view of
\eqref{eq:tau_leq_N_leq_rho+1} and implication \eqref{1212}
$\Rightarrow$ \eqref{12121} of Theorem \ref{main1}. If $a<R$, we
are done. In the case $a=R$ it remains to show that
\begin{equation}\label{2222}
\me Xe^{-\gamma_0 X} > 0.
\end{equation}
Define the measure $V$ by
\begin{equation}    \label{eq:V(A)}
V(A)    ~:=~    \sum_{n \geq 0} e^{Rn} \mmp\{S_n\in A\},
\end{equation}
for Borel sets $A \subset \mr$. Then from \eqref{eq:rho(x)=n} we infer that
\begin{equation}    \label{eq:Ee^arho(x)}
\infty  ~>~ \me e^{R\rho} ~=~ \underset{(-\infty,0]}{\int} \!
\mmp\{\inf_{n \geq 1} S_n > -y\} \, V({\rm d}y).
\end{equation}
Under the present assumptions, the random walk $(S_n)_{n \geq 0}$
drifts to $+\infty$ a.s. Thus, $\mmp\{\inf_{n
\geq 1} S_n > \varepsilon\} > 0$ for some $\varepsilon > 0$.
With such an $\varepsilon$,
\begin{equation*}
\infty ~>~
\underset{(-\varepsilon,0]}{\int} \! \mmp\{\inf_{n \geq 1} S_n > -y\} \, V({\rm d}y)
~\geq~  \mmp\{\inf_{n \geq 1} S_n > \varepsilon\} V((-\varepsilon,0]).
\end{equation*}
Therefore,
\begin{eqnarray*}
\infty &>&
V((-\varepsilon,0]) ~=~ \sum_{n=0}^\infty\me_{\gamma_0}
e^{\gamma_0 S_n}\1_{\{-\varepsilon < S_n \leq 0\}}	\\
& \geq &
e^{-\gamma_0 \varepsilon} \sum_{n=0}^\infty
\mmp_{\gamma_0}\{-\varepsilon < S_n \leq 0\}.
\end{eqnarray*}
Hence $(S_n)_{n \geq 0}$ must be transient under
$\mmp_{\gamma_0}$, which yields the validity of \eqref{212121} in
view of \eqref{eq:E_{gamma}X} and $\me_{\gamma_0} S_1 = e^R \me X
e^{-\gamma_0 X}$. The proof is complete.
\end{proof}

\begin{proof}[Proof of Theorem \ref{Thm:asymptotics}]
(a) In view of \eqref{eq:Ee^atau(x)}, \eqref{eq:Doney} and
\eqref{eq:Z(x)}, in order to find the asymptotics of $\me e^{a
\tau(x)}$, it suffices to determine the asymptotic behaviour of
$Z^>_\gamma(x)$ defined in \eqref{eq:Z(x)}. By the key renewal
theorem on the positive half-line,
\begin{equation} Z^>_\gamma(x)
~\underset{x \to \infty}{\to}~
\begin{cases}
\frac{1}{\gamma \me_{\gamma} S_{\tau}}
&   \text{if $X$ is non-lattice,}   \\
\frac{\lambda}{(1-e^{-\lambda\gamma}) \me_{\gamma} S_{\tau}} &
\text{if $X$ is $\lambda$-lattice}
\end{cases}
\end{equation}
where the limit $x \to \infty$ is taken over $x \in \lambda \mn$
when $X$ is lattice with span $\lambda > 0$.

It remains to check that $\me_{\gamma} S_{\tau}$ is finite. As
pointed out in \eqref{eq:E_{gamma}X}, either $\me_{\gamma} X \in
(0,\infty)$ or $\me_{\gamma} X = 0$. In the first case, $S_n \to
\infty$ a.s.\ under $\mmp_{\gamma}$ and, therefore, $\me_{\gamma}
\tau < \infty$, see, for instance, \cite[Theorem 2,
p.\,151]{ChowTeicher}, which yields $\me_{\gamma} S_{\tau} <
\infty$ by virtue of Wald's identity. If, on the other hand,
$\me_{\gamma} X = 0$, then $\me_{\gamma} \tau = \infty$ and we
cannot argue as above. But in this case, by \cite[Formula
(4a)]{Doney1980}, $\me_{\gamma} (S_1^+)^2<\infty$ is sufficient
for $\me_{\gamma} S_{\tau} < \infty$ to hold. Now the finiteness
of
\begin{equation*}
\me_{\gamma} e^{\gamma S_1} ~=~ \varphi(\gamma)^{-1} ~<~    \infty,
\end{equation*}
implies the finiteness of $\me_{\gamma} (S_1^+)^2$, and the proof
of part (a) is complete.\newline (b) We only consider the case
when $X$ is non-lattice since the lattice case can be
treated similarly. Denote by $R_x := S_{\tau(x)}-x$ the overshoot.
Since $\me e^{a\tau(x)}=\me_\gamma e^{\gamma S_{\tau(x)}}$, we
have in view of the already proved part (a)
\begin{equation}\label{1}
\lix \me_\gamma e^{\gamma R_x}  ~=~ \frac{\me e^{a\tau}-1}{\gamma
\me_\gamma S_\tau}.
\end{equation}
By Theorem \ref{main1}, if $\me e^{aN(x)}<\infty$, then $\me
e^{a\tau(x)}<\infty$. Therefore, according to part (a), we have $0
< \me_\gamma S_\tau<\infty$. This implies (see, for instance,
\cite[Theorem 10.3 on p.\,103]{Gut2009}) that, as $x \to \infty$,
$R_x$ converges in distribution to a random variable $R_{\infty}$
satisfying
\begin{equation*}
\mmp_\gamma \{R_{\infty} \leq x\}
~=~ \frac{1}{\me_\gamma S_\tau} \int_0^x \, \mmp_\gamma\{S_\tau > y\}\,{\rm d}y,
\quad   x \geq 0.
\end{equation*}
In particular, under $\mmp_{\gamma}$, $e^{\gamma R_x}$ converges in distribution to $e^{\gamma R_{\infty}}$. Further,
\begin{equation*}
\me_\gamma e^{\gamma R_{\infty}}
~=~ \frac{1}{\me_\gamma S_\tau} \int_0^\infty e^{\gamma y}\mmp_\gamma\{S_\tau>y\}\,{\rm d}y
~=~ \frac{\me_\gamma e^{\gamma S_\tau}-1}{\gamma \me_\gamma S_\tau}
~=~ \frac{\me e^{a\tau}-1}{\gamma \me_\gamma S_\tau}.
\end{equation*}
Therefore, \eqref{1} can be rewritten as follows:
\begin{equation}    \label{2}
\lix \me_\gamma e^{\gamma R_x}  ~=~ \me_\gamma e^{\gamma R_{\infty}}.
\end{equation}
Now we invoke a variant of Fatou's lemma sometimes called Pratt's
lemma \cite[Theorem 1]{Pratt}. To this end, note that, by a
standard coupling argument, we can assume w.l.o.g.\ that $R_x \to
R_{\infty}$ $\mmp_{\gamma}$-a.s. From \eqref{eq:N_decomposed} we
infer that for $f(y) := \me e^{aN(y)}$, $y \in \mr$ we have
\begin{equation*}
f(x)    ~=~ \me e^{a N(x)}  ~=~ e^{-a} \me e^{a\tau(x)}f(-R_x)
            ~=~ e^{\gamma x} e^{-a} \me_\gamma e^{\gamma R_x}f(-R_x).
\end{equation*}
$f$ is an increasing function and, therefore, has only
countably many discontinuities. Hence
$e^{\gamma R_x} f(-R_x)$ converges $\mmp_\gamma$-a.s.\ to
$e^{\gamma R_{\infty}} f(-R_{\infty})$. Further,
\begin{equation*}
e^{\gamma R_x}f(-R_x)   ~\leq~  e^{\gamma R_x} f(0)
\end{equation*}
and $e^{\gamma R_x} f(0)$ converges $\mmp_\gamma$-a.s.\ to
$e^{\gamma R_\infty}f(0)$. Finally,
\begin{equation*}
\lix \me_\gamma e^{\gamma R_x} f(0) ~=~ \me_\gamma e^{\gamma
R_\infty} f(0).
\end{equation*}
Therefore the assumptions of Pratt's lemma are fulfilled and an application of the lemma yields
\begin{eqnarray*}
\lix e^{-\gamma x}f(x)
& = &
e^{-a} \lix \me_\gamma e^{\gamma R_x} f(-R_x)
~=~ e^{-a} \me_\gamma e^{\gamma R_{\infty}}f(-R_{\infty})   \\
& = &
\frac{e^{-a}}{\me_\gamma S_\tau} \int_0^\infty e^{\gamma y}f(-y) \mmp_\gamma\{S_\tau>y\} \, {\rm d}y    \\
& = &
\frac{e^{-a} \me_\gamma \int_0^{S_\tau} e^{\gamma y}f(-y) \, {\rm d}y}{\me_\gamma S_\tau}.
\end{eqnarray*}
(c) From \eqref{eq:rho(x)=n} and \eqref{eq:V(A)} (with $R$ replaced by
$a$ and $M = \inf_{k \geq 1} S_k$), we infer
\begin{eqnarray*}
\me e^{a\rho(x)}
& = &   \underset{(-\infty,x]}{\int} \! \mmp\{M>x-y\} \, V({\rm d}y)    \\
& = & V(x) \mmp\{M>0\} - \underset{(0,\infty)}{\int} \! V(x-y) \, \mmp\{M\in {\rm d}y\},
\quad   x \geq 0.
\end{eqnarray*}
Assume that $X$ is non-lattice and set $D_1 :=
\frac{e^{-a}}{\gamma \me X e^{-\gamma X}}$. It follows from
\eqref{212121} that $D_1 \in (0,\infty)$ and from
\cite[Theorem 2.2]{IksMei} that
\begin{equation}\label{333}
V(x)~\sim~ D_1e^{\gamma x}, \ \ x\to\infty.
\end{equation}
The latter implies that for any $\varepsilon>0$ there exists an
$x_0>0$ such that
\begin{equation*}
(D_1-\varepsilon)e^{\gamma y}   ~\leq~  V(y)    ~\leq~
(D_1+\varepsilon)e^{\gamma y}
\end{equation*}
for all $y\geq x_0$. Fix one such $x_0$. Then for all $x\geq x_0$,
\begin{eqnarray*}
(D_1-\varepsilon) \, e^{\gamma x} \!\!\!
\underset{(0,x-x_0]}{\int} \!\!\! e^{-\gamma y} \, \mmp\{M\in {\rm
d}y\} & \leq &
\underset{(0,x-x_0]}{\int} \!\!\! V(x-y) \, \mmp\{M\in {\rm d}y\}   \\
& \leq & (D_1+\varepsilon) \, e^{\gamma x} \!\!\!
\underset{(0,x-x_0]}{\int} \!\!\! e^{-\gamma y} \, \mmp\{M\in {\rm
d}y\},
\end{eqnarray*}
and $\int_{(x-x_0,\infty)} V(x-y) \mmp\{M\in {\rm d}y\} \in [0,
V(x_0)]$. Letting first $x \to \infty$ and then $\varepsilon\to 0$
we conclude that
\begin{equation*}
\lix e^{-\gamma x} \underset{(0,\infty)}{\int} \! V(x-y) \,
\mmp\{M\in {\rm d}y\} ~=~ D_1\me e^{-\gamma M} \1_{\{M>0\}}.
\end{equation*}
Together with \eqref{333} the latter yields
\begin{eqnarray*}
\me e^{a\rho(x)}~&\sim& ~ D_1\big(\mmp\{M>0\}-\me e^{-\gamma M}
\1_{\{M>0\}}\big)e^{\gamma x}\\ &=&~D_1 \big(1-\me e^{-\gamma
M^+}\big)e^{\gamma x}, \ \ x\to\infty.
\end{eqnarray*}
Under the present assumptions, the random walk $(S_n)_{n\geq 0}$
drifts to $+\infty$ a.s. Therefore, $\mmp\{M>0\}>0$ which implies
that $1-\me e^{-\gamma M^+} > 0$ and completes the proof in the
non-lattice case.

The proof in the lattice case is based on the lattice version of
\cite[Theorem 2.2]{IksMei} and follows the same path.
\end{proof}

\section{Examples} \label{sec:apps_and_exs}

In this section, retaining the notation of Section 1, we
illustrate the results of Theorem \ref{main1} and Theorem
\ref{main2} by three examples.

\begin{ex}[Simple random walk]
Let $1/2 < p < 1$ and $\mmp\{X=1\} = p = 1-\mmp\{X=-1\} =: 1-q$.
Then the Laplace transform $\varphi$ of $X$ is given by
$\varphi(t)=pe^{-t}+qe^t$ and $R = -\log (2\sqrt{pq})$. According
to \cite[Formula (3.7) on p.~272]{Feller} and \cite[Example
1]{Doney19892}, respectively,
\begin{equation*}
\mmp\{\tau=2n-1\}   ~=~ \frac{1}{2q}\frac{{2n \choose n}}{2^{2n}(2n-1)}(2\sqrt{pq})^{2n},
\ \mmp\{\tau=2n\} = 0,
\quad   n \in \mn;
\end{equation*}
\begin{equation*}
\mmp\{\rho=2n\}  ~=~ (p-q){2n \choose n}(pq)^n, \
\mmp\{\rho=2n+1\}=0, \quad   n \in \mn_0.
\end{equation*}
Stirling's formula yields
\begin{equation}\label{22}
\frac{{2n \choose n}}{2^{2n}} \sim \frac{1}{\sqrt{\pi n}}, \ \
n\to\infty,
\end{equation}
which implies that
\begin{equation*}
\me e^{R\tau} < \infty \quad   \text{and}  \quad \me e^{R\rho} =
\infty.
\end{equation*}
\end{ex}

\begin{ex}
Let $X \od Y_1-Y_2$ where $Y_1$ and $Y_2$ are independent r.v.'s
with exponential distributions with parameters $\alpha$ and
$\kappa$, respectively, $0<\alpha<\kappa$. Then $\varphi(t) = \me
e^{-tX} = \frac{\alpha \kappa}{(\alpha+t)(\kappa-t)}$ and $R=-\log
(\frac{4\alpha\kappa}{(\alpha+\kappa)^2})$. According to
\cite[Formula (8.4) on p.~193]{Feller2}, for $a \in (0,R]$,
\begin{equation*}\me
e^{a\tau}   ~=~
(2\alpha)^{-1}(\alpha+\kappa-\sqrt{(\alpha+\kappa)^2-4\alpha
\kappa e^a})
                            ~<~ \infty.
\end{equation*}
Further, for $n\in\mn_0$,
\begin{eqnarray*}
\mmp\{\rho=n\} & = &
\underset{(-\infty,0]}{\int} \mmp\{\inf_{k \geq 1} S_k > -x\} \, \mmp\{S_n \in {\rm d}x\}    \\
& = &
\underset{(-\infty,0]}{\int} \underset{(-x,\infty)}{\int} \mmp\{\inf_{k \geq 0} S_k > -x-y\}
\, \mmp\{S_1 \in {\rm d}y \}\, \mmp\{S_n \in {\rm d}x\}.
\end{eqnarray*}
According to \cite[Formula (5.9) on p.\,410]{Feller2},
\begin{equation*}
\mmp\{\inf_{k \geq 0} S_k > -x-y\} ~=~ \mmp\{\sup_{k \geq 0}(-
S_k)<x+y\} ~=~ 1-\frac{\alpha}{\kappa} e^{-(\kappa-\alpha)(x+y)}.
\end{equation*}
Note that $S_n$ has the same law as the difference of two
independent random variables with gamma distribution with
parameters $(n,\alpha)$ and $(n,\kappa)$, respectively, which
particularly implies that, for $x>0$, the density of $S_1$ takes
the form $\frac{\alpha \kappa e^{-\alpha x}}{\alpha + \kappa}$.
Thus\footnote{We do not claim that this formula is new, but we
have not been able to locate it in the literature.}, for
$n\in\mn$,
\begin{eqnarray*}
\mmp\{\rho=n\}
& = &
\underset{(-\infty,0]}{\int} \int_{-x}^{\infty} \left(1-\frac{\alpha}{\kappa} e^{-(\kappa-\alpha)(x+y)}\right)
\, \frac{\alpha \kappa e^{-\alpha y}}{\alpha + \kappa} \, {\rm d}y \, \mmp\{S_n \in {\rm d}x\}   \\
& = &
\frac{\kappa- \alpha}{\kappa} \underset{(-\infty,0]}{\int} e^{\alpha x}
\, \mmp\{S_n \in {\rm d}x\} \\
& = &
\frac{\kappa- \alpha}{\kappa} \int_0^{\infty} \int_0^t e^{\alpha(s-t)} \frac{\alpha^n s^{n-1} e^{-\alpha s}}{(n-1)!} \frac{\kappa^n t^{n-1} e^{-\kappa t}}{(n-1)!} {\rm d}s \, {\rm d}t \\
& = &
\frac{\kappa-\alpha}{\kappa}\frac{\alpha^n \kappa^n}{n!(n-1)!} \int_0^{\infty} t^{2n-1} e^{-(\alpha + \kappa) t} {\rm d}t  \\
& = &
\frac{\kappa-\alpha}{(\kappa+\alpha)^{2n}}\alpha^n\kappa^{n-1}
{2n-1 \choose n},
\end{eqnarray*}
and
\begin{equation*}
\mmp\{\rho=0\}=\frac{\kappa-\alpha}{\kappa}.
\end{equation*}
Hence,
\begin{equation*}
\me e^{R \rho} ~=~\frac{\kappa-\alpha}{\kappa}\bigg(1+ \sum_{n \geq
1}4^{-n}{2n-1\choose n}\bigg) ~=~ \infty,
\end{equation*}
since relation \eqref{22} implies that the summands are of order
$1/\sqrt{n}$, as $n \to \infty$.
\end{ex}

Finally, we point out an explicit form of distribution of $X$ for
which $\me e^{R\rho(x)}<\infty$ for every $x\geq 0$.
\begin{ex}
Fix $h > 0$ and take any probability law $\mu_1$ on $\mr$ such
that the Laplace-Stieltjes transform
$$\psi(t) ~:=~ \int_\mr e^{-tx}\mu_1({\rm d}x), \quad t \geq 0,$$
is finite for $0 \leq t \leq h$ and infinite for $t>h$, and the left derivative of $\psi$
at $h$, $\psi'(h)$, is finite and positive. For instance, one can
take
$$	\mu_1({\rm d}x) ~:=~ c e^{-h |x|}/(1+|x|^{r}){\rm d}x,
\quad x\in\mr	$$
where $r>2$ and $c:=\big(\int_\mr e^{-h|x|}(1+|x|^r)^{-1}{\rm d}x\big)^{-1}>0$.

Now choose $s$ sufficiently large such that $\psi'(h) < s
\psi(h)$. Then $\varphi(t) = e^{-st} \psi(t)$ is the
Laplace-Stieltjes transform of the distribution $\mu:=\delta_s\ast
\mu_1$. Let $X$ be a random variable with distribution $\mu$.
Plainly, $\varphi(t)$ is finite for $0 \leq t \leq h$ but infinite
for $t > h$. Furthermore,
\begin{equation*}
\varphi'(t) ~=~ e^{-st}(\psi'(t)-s\psi(t)), \quad |t|\leq h.
\end{equation*}
In particular, $\varphi'(h) < 0$ which, among other things,
implies that $R=-\log \varphi(h)$ and that $\gamma_0=h$.
Therefore, $\me Xe^{-\gamma_0 X}=-\varphi^\prime(h)>0$,
and by Theorem \ref{main1}, $\me e^{R \rho(x)} < \infty$
for all $x \geq 0$.
\end{ex}

\noindent
\author{Alexander Iksanov   \\
        Faculty of Cybernetics  \\
        National T.\ Shevchenko University of Kiev  \\
        01033 Kiev, Ukraine \\
        e-mail: iksan@unicyb.kiev.ua    \\
        \and    \\
        Matthias Meiners    \\
        Department of Mathematics   \\
        Uppsala University \\
        Box 480, 751 06 Uppsala, Sweden \\
        e-mail: matthias.meiners@math.uu.se}
\end{document}